\newtheorem{theorem}{Theorem}
\newtheorem{lemma}[theorem]{Lemma}
\newcommand{\ds}{\displaystyle}
\newcommand{\ts}{\textstyle}
\title{On the probability of planarity of a \\ random graph  near the critical point}
\author{Marc Noy \thanks{Departament de Matem\`atica Aplicada II.
Universitat Polit\`ecnica de Catalunya.
Jordi Girona 1-3,   08034 Barcelona. Spain. {\tt
marc.noy@upc.edu}. Partially supported by grants MTM2011-24097 and DGR2009-SGR1040.} \and
Vlady Ravelomanana \thanks{Liafa Umr CNRS 7089. Universit\'e Denis Diderot.
175, Rue du Chevaleret 75013 Paris. France. {\tt vlad@liafa.jussieu.fr}.}
\and
Juanjo Ru\'e \thanks {Instituto de Ciencias Matem\'{a}ticas.
Calle Nicol\'{a}s Cabrera 15, 28049 Madrid. Spain. {\tt juanjo.rue@icmat.es}.
Partially supported by grants JAE-DOC (CSIC),  MTM2011-22851 and  SEV-2011-0087.}
}
\date{}
\begin{document}
\maketitle

\begin{abstract}
Let  $G(n,M)$ be the uniform random graph with $n$ vertices and $M$ edges.
Erd\H{o}s and R\'enyi (1960) conjectured that the limiting probability 
$$
\lim_{n \to  \infty} \Pr\{G(n,\textstyle{n\over 2}) \hbox{ is planar}\}
$$
exists and is a  constant strictly  between $0$ and $1$.
\L uczak, Pittel and Wierman (1994) proved this conjecture and
Janson, \L uczak, Knuth and Pittel (1993)
gave lower and upper bounds for this probability.
In this paper we determine the exact limiting probability of a random graph being planar near the critical point $M=n/2$.
For each  $\lambda$, we find an exact analytic expression for
$$ p(\lambda) =  \lim_{n \to  \infty} \Pr\left\{G\left(n,\textstyle{n\over 2}(1+\lambda n^{-1/3})\right) \hbox{ is planar} \right\}.$$
In particular, we obtain $p(0) \approx 0.99780$.
We extend these results to classes of graphs closed under taking minors. As an example, we show that the probability of $G(n,\textstyle{n\over 2})$ being series-parallel converges to $0.98003$.
For the sake of completeness and exposition we reprove in a concise way several basic properties we need of a random graph near the critical point.
\end{abstract}

\bigskip
\begin{flushright}
\textit{We dedicate this paper to the memory of Philippe Flajolet.}
%, \\ from whom we learned Analytic Combinatorics, and much more.}
\end{flushright}

%%%%%%%%%%%%%%%%%%%%%%%%%%%%%%
\section{Introduction}
%%%%%%%%%%%%%%%%%%%%%%%%%%%%%%

The random graph model $G(n,M)$ assigns uniform probability to graphs on $n$ labelled vertices with $M$ edges.
%It is intimately related to the $G(n,p)$ model, where each edge is chosen independently with probability $p$, but in this paper we only need the  $G(n,M)$ model.
A fundamental result of  Erd\H os and R\'{e}nyi \cite{erdos} is that  the random graph $G(n,M)$ undergoes an abrupt change when $M$ is around $n/2$, the value for which  the average vertex degree is equal to one. When $M = cn/2$ and $c<1$, almost surely the connected components are all  of order $O(\log n)$, and are either trees or unicyclic graphs. When $M = cn/2$ and $c>1$, almost surely there is a unique giant component of size $\Theta(n)$.
We direct to reader to the  reference texts \cite{bollobas} and \cite{janson} for a detailed discussion of these facts.

We concentrate on the so-called critical window %of width $n^{2/3}$,
$M= {n\over 2} (1 +\lambda  n^{-1/3}),$
where $\lambda$ is a real number, identified by the work of Bollob\'{a}s \cite{bollobas84a,bollobas84b}.
%The scale of the window is $n^{2/3}$, that is, $M= \textstyle{n\over 2}+ \lambda n^{2/3}$, with $\lambda \in (-\infty,+\infty)$.
Let us recall that the excess of a connected graph is the number of edges minus the number of vertices. A connected graph is complex if it has positive excess.
As $\lambda \to -\infty$, complex components  disappear and only trees and unicyclic components survive, and as $\lambda \to +\infty$, components with unbounded excess appear.
A thorough analysis of the random graph in the critical window can be found in \cite{giant} and~\cite{LuczakStruct}, which constitute our basic references.

For each fixed $\lambda$, we denote the random graph $G\left(n,\textstyle{n\over 2}(1+\lambda n^{-1/3})\right)$ by $G(\lambda)$.  The core $C(\lambda)$ of $G(\lambda)$ is obtained by repeatedly removing all vertices of degree one from $G(\lambda)$. The kernel $K(\lambda)$ is obtained from $C(\lambda)$ by replacing all maximal paths of vertices of degree two by single edges.
The graph $G(\lambda)$ satisfies almost surely several fundamental properties, that were established in~\cite{LuczakStruct} by a subtle simultaneous analysis of the $G(n,M)$ and the $G(n,p)$ models.

\begin{enumerate}
 \setlength{\itemsep}{-1pt}%
  \item The number of complex components is bounded.
  \item Each complex component has size of order $n^{2/3}$, and the largest suspended tree in each complex component has size of order $n^{2/3}$.
  \item $C(\lambda)$ has size of order $n^{1/3}$ and maximum degree three, and the distance between two vertices of degree three in $C(\lambda)$  is of order $n^{1/3}$.
     \item $K(\lambda)$ is a cubic (3-regular) multigraph of bounded size.
    \end{enumerate}
The key property for us is the last one.
It implies that almost surely the components of $G(\lambda)$ are trees, unicyclic graphs, and those obtained from a cubic multigraph~$K$ by attaching rooted trees to the vertices of $K$, and attaching ordered sequences of rooted trees to the edges of $K$. Some care is needed here, since the resulting graph may not be simple, but  asymptotically this can be accounted for.
%In Figure 1 we show an example of such a graph.
%\begin{figure}[h]
%    \begin{center}
%      \includegraphics[scale=0.3]{muP20000.pdf}
%\caption{a typical random graph in the critical window.}
%    \end{center}
%\end{figure}

It is clear that $G(\lambda)$ is planar if and only if the kernel $K(\lambda)$ is planar. Then by counting planar cubic multigraphs it is possible to estimate the probability that $G(\lambda)$ is planar. To this end we use generating functions. The trees attached to $K(\lambda)$ are encoded by the generating function~$T(z)$ of rooted trees, and complex analytic methods are used to estimated the coefficients of the corresponding series.
This allows us to determine the exact probability
$$ p(\lambda) =  \lim_{n \to  \infty} \Pr\left\{G\left(n,\textstyle{n\over 2}(1+\lambda n^{-1/3})\right) \hbox{ is planar} \right\}.$$
In particular, we obtain $p(0) \approx 0.99780$.

This approach was initiated in  the seminal paper by Flajolet, Knuth and Pittel \cite{FKP89}, where the authors determined the threshold for the appearance of the first cycles in $G(n,M)$.
A basic feature  in \cite{FKP89} is  to estimate coefficients of large powers of generating functions using Cauchy integrals and the saddle point method.
This path was followed  by Janson, Knuth, \L uczak and Pittel~\cite{giant}, obtaining a wealth of results on~$G(\lambda)$. Of particular importance for us is the determination in \cite{giant} of the limiting probability that $G(\lambda)$ has given excess.
The approach by \L uczak, Pittel and Wierman in~\cite{LuczakStruct} is more probabilistic and has as starting point the classical estimates by Wright \cite{wright} on the number of connected graphs with fixed excess.
The range of these estimates was extended by Bollob\'{a}s \cite{bollobas84a} and more recently the analysis was refined by Flajolet, Salvy and Schaeffer~\cite{airy}, by  giving complete asymptotic expansions in terms of the Airy function.

The paper is organized as follows. In Section \ref{sec:prelim} we present the basic lemmas needed in the sequel. Except for the proof of Lemma~\ref{le:kernel}, the paper is self-contained.
Lemmas \ref{le:multigraph} to
%, \ref{le:counting} and
\ref{le:airy} are proved in \cite{giant} with a different presentation; for the sake of completeness and exposition  we provide shorter and hopefully more accessible  proofs.
In Section \ref{sec:cubic} we compute the number of cubic planar multigraphs, suitably weighted, where we follow \cite{Kang-Luczak}. % (correcting some minor errors).
In Section \ref{sec:main} we compute the exact probability that the random graph $G(\lambda)$ is planar as a function of $\lambda$. We generalize this result by determining the probability that $G(\lambda)$ belongs to a minor-closed class of graphs in several cases of interest.

We close this introduction with a remark. The problem of 2-satisfiability presents a striking  analogy with the random graph process. Given $n$ Boolean variables and a conjunctive formula of $M$ clauses, each involving two literals, the problem is to determine the probability that the formula is satisfiable when $M$ grows with $n$. The threshold has been established at $M=n$ and the critical window is also of width $n^{2/3}$; see  \cite{2-sat}. However the exact probability of satisfiability when the number of clauses is $n(1 + \lambda n^{-1/3})$ has not been determined, and appears to be a more difficult problem.

%%%%%%%%%%%%%%%%%%%%%%%%%%%%%%
\section{Preliminaries}\label{sec:prelim}
%%%%%%%%%%%%%%%%%%%%%%%%%%%%%%

All graphs in this paper are labelled.
The size of a graph is its number of vertices.
A multigraph is a graph with loops and multiple edges allowed.

We recall that the generating function $T(z)$ of rooted trees satisfies
$$
    T(z) = z e^{T(z)}.
$$
Using Lagrange's inversion \cite{FS}, one recovers the classical formula $n^{n-1}$ for the number of rooted labelled trees.The generating function for unrooted trees is
$$U(z) = T(z) - {T(z)^2\over2}.$$
This can be proved by integrating the relation $T(z) = z U'(z)$, or more combinatorially using the dissimilarity theorem for trees \cite{otter}.

A graph is unicyclic if it is connected and has a unique cycle.
Unicyclic graphs can be seen as an undirected cycle of length  at least three to which we attach a sequence of rooted trees. Since the directed cycle construction  corresponds algebraically to $\log(1/(1-T(z))$ (see \cite{FS}), the generating function is
$$
    V(z) = {1 \over 2} \left( \log{1 \over 1-T(z)} - T(z) - {T(z)^2 \over 2} \right).
    $$
Graphs all whose components are unicyclic are given by the exponential formula:
$$
    e^{V(z)} = {e^{-T(z)/2-T(z)^2/4} \over \sqrt{1-T(z)}}.
$$

The following result, which is fundamental for us, is proved in \cite[Theorem 4]{LuczakStruct} by a careful analysis of  the structure of complex components in $G(\lambda)$.
We say that a property $\cal P$ holds asymptotically almost surely (a.a.s.) in $G(n,M)$ if the probability that  $\cal P$ is satisfied tends to one as $n\to\infty$.
Recall that $G(\lambda) = G\left(n,\textstyle{n\over 2}(1+\lambda n^{-1/3})\right)$.

\begin{lemma}\label{le:kernel}
For each $\lambda$, the kernel of $G(\lambda)$ is a.a.s.  a cubic multigraph.
\end{lemma}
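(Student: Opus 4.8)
The plan is to combine the kernel-plus-trees decomposition recalled just before the lemma with a saddle-point estimate in the critical window, and to show that kernels carrying a vertex of degree $\geq 4$ account for only a vanishing fraction of the probability mass. First I would reduce the statement to a degree condition. Suppressing vertices of degree two does not alter the degree of the remaining branch vertices, so the degree of a vertex of $K(\lambda)$ equals its degree in the core $C(\lambda)$, and by construction every such vertex has degree at least three. Hence $K(\lambda)$ is cubic if and only if it has no vertex of degree $\geq 4$. For a complex component of excess $r$, summing $\deg(v)-3$ over the kernel gives $2e-3v=2(v+r)-3v=2r-v$, where $v$ and $e$ are the numbers of vertices and edges of the kernel; since each summand is nonnegative, $v\le 2r$, with equality precisely when the kernel is cubic. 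So it suffices to show that a.a.s. every complex component realizes the extremal value $v=2r$.

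Next I would set up the generating functions dictated by the decomposition. A complex component with a prescribed kernel $K$ having $v$ vertices and $e$ edges is built by rooting a tree at each vertex of $K$ (a factor $T(z)$ per vertex) and inserting an ordered, possibly empty, sequence of rooted trees along each edge (a factor $(1-T(z))^{-1}$ per edge), so its exponential generating function equals, up to a constant depending only on $K$,
$$ \frac{T(z)^{v}}{(1-T(z))^{e}}. $$
Because $T(z)\to 1$ as $z\to e^{-1}$, the order of the singularity at $z=e^{-1}$ is exactly $e$, the number of kernel edges. Among all admissible kernels of fixed excess $r$ one has $e=v+r\le 3r$, with equality if and only if $K$ is cubic; a single vertex of degree $\geq 4$ lowers $e$ by at least one and therefore makes the contribution strictly less singular.

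Finally I would extract coefficients in the critical window. Conditioning on the a.a.s. event that the total excess is a bounded integer reduces the problem, for each $r$, to finitely many kernels. Writing the probability that $G(\lambda)$ has a non-cubic kernel as a ratio of coefficients $[z^{n}]$ with the edge count pinned to $M=\tfrac{n}{2}(1+\lambda n^{-1/3})$, I would evaluate numerator and denominator by the saddle-point method, as in \cite{FKP89} and \cite{giant}. At the relevant saddle point $z_{0}$ one has $1-T(z_{0})\asymp n^{-1/3}$, so each extra factor $(1-T(z_{0}))$ carried by a less-singular kernel contributes $\asymp n^{-1/3}$. Comparing the cubic term (of order $n^{e/3}$ with $e=3r$) against any non-cubic term (of order at most $n^{(3r-1)/3}$) yields
$$ \Pr\{K(\lambda)\ \text{is not cubic}\}=O\!\left(n^{-1/3}\right)\longrightarrow 0. $$

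The main obstacle is precisely this last comparison made uniform: one must control the sum over all non-cubic kernels, including those with several high-degree vertices, loops or multiple edges, and ensure that the error terms of the saddle-point expansion do not erode the $n^{-1/3}$ gap, while simultaneously pinning both the vertex and edge counts and allowing several complex components to coexist. This uniform bookkeeping, carried out by a more probabilistic route based on Wright's constants, is exactly what is done in \cite{LuczakStruct}, on which we rely for the complete proof.
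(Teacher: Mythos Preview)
The paper does not give its own proof of this lemma: it simply states that the result ``is proved in \cite[Theorem~4]{LuczakStruct}'' and explicitly remarks that the paper is self-contained \emph{except} for this lemma. Your proposal ultimately lands in the same place, since your final paragraph also defers to \cite{LuczakStruct} for the complete argument. In that sense the two are equivalent as proofs.

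What you add beyond the paper is a heuristic in the analytic style of \cite{giant} rather than the probabilistic route of \cite{LuczakStruct}: the observation that a non-cubic kernel of excess $r$ has at most $3r-1$ edges, hence its generating function carries at least one fewer factor of $(1-T(z))^{-1}$, which at the saddle point costs a factor $n^{-1/3}$. This intuition is correct and is exactly why Lemma~\ref{le:integral} in the paper is stated with a free parameter $a$ in $T(z)^{a}$ (the estimate is insensitive to polynomial factors in $T$, only the power of $1-T$ matters). One genuine soft spot in your sketch is the step ``conditioning on the a.a.s.\ event that the total excess is a bounded integer'': tightness of the excess is essentially equivalent to the kernel being cubic of bounded size, so invoking it here is close to circular. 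In \cite{giant} this is resolved analytically by proving the identity $\sum_{r\ge 0}\sqrt{2\pi}\,e_{r}A(3r+\tfrac12,\lambda)=1$, which simultaneously shows that cubic kernels of every fixed size exhaust all the probability mass; in \cite{LuczakStruct} it is resolved probabilistically via Wright's estimates. Either way, the missing ingredient is precisely the uniform control you flag at the end.
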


Given a cubic multigraph $M$ with $a$ loops, $b$ double edges and $c$ triple edges, define its weight~ as
$$
        w(M) = 2^{-a} 2^{-b} 6^{-c}.
$$
This weight (called the compensation factor in \cite{giant}), has the following explanation. When we substitute edges of the kernel by sequences of rooted trees, a loop has two possible orientations that give raise to the same graph. A double (triple) edge can be permuted in two (six) ways, again producing the same graph. From now on, all multigraphs we consider are weighted, so that we omit the qualifier.
The following lemma is proved in \cite{giant} using a combination of guessing and  recurrence relations. The next proof is already contained in
\cite[Chap. 2]{bollobas}.

\begin{lemma}\label{le:multigraph}
The number $E_r$ of cubic multigraphs with $2r$ vertices is equal to
$$
 E_r = {(6r)! \over  (3r)! 2^{3r} 6^{2r}}.
$$
\end{lemma}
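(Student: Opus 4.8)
The plan is to use the configuration (pairing) model of Bollob\'as \cite{bollobas}, which converts the enumeration of cubic multigraphs into a trivial count of perfect matchings, with the weight $w(M)$ appearing exactly as the factor that corrects the resulting overcounting. Concretely, I would first attach to each of the $2r$ vertices three distinguishable half-edges (stubs), producing $6r$ stubs in total. A \emph{configuration} is a perfect matching of these $6r$ stubs into $3r$ pairs; each pair is read as an edge and each vertex as the contraction of its three stubs, so that every configuration projects to a cubic multigraph on $2r$ labelled vertices, loops and multiple edges arising naturally. The number of perfect matchings of a $6r$-element set is the double factorial
$$
(6r-1)!! = \frac{(6r)!}{(3r)!\,2^{3r}},
$$
so this is the total number of configurations.

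The key step is to analyse the fibres of the projection from configurations to multigraphs. The group $(S_3)^{2r}$, permuting the three stubs within each vertex, acts on configurations, and two configurations project to the same labelled multigraph exactly when they lie in the same orbit; hence the fibre over a multigraph $M$ has size $6^{2r}/|\mathrm{Stab}(M)|$, where $\mathrm{Stab}(M)$ is the group of stub permutations preserving the matching. I would then show that this stabiliser is a direct product over the ``non-simple'' features of $M$: the transposition of the two stubs forming a loop contributes a factor $2$, the simultaneous transposition realising a double edge contributes a factor $2$, and the copy of $S_3$ permuting the three strands of a triple edge contributes a factor $6$, while single edges between distinct vertices contribute nothing. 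Because these features act on pairwise disjoint sets of stubs, one obtains $|\mathrm{Stab}(M)| = 2^a 2^b 6^c = w(M)^{-1}$, so the fibre over $M$ has size exactly $6^{2r}\,w(M)$.

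Summing over all multigraphs then gives
$$
(6r-1)!! = \sum_M 6^{2r}\, w(M) = 6^{2r} E_r,
$$
and dividing by $6^{2r}$ yields $E_r = (6r)!/\big((3r)!\,2^{3r}\,6^{2r}\big)$, as claimed. The main obstacle is the stabiliser computation of the middle paragraph: one must verify carefully that the only stub permutations fixing a configuration are those swapping the stubs of a loop, exchanging the two strands of a double edge, or permuting the three strands of a triple edge, and that no ``global'' permutation mixing distinct edges can fix the matching. This is precisely the combinatorial content of the compensation factor $w(M)$ introduced just before the statement, so once it is checked the whole enumeration collapses to the elementary pairing count above.
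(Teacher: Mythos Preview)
Your proposal is correct and is precisely the paper's approach: both arguments use the configuration/pairing model with $6r$ darts, count the $(6r)!/((3r)!\,2^{3r})$ matchings, and divide by $6^{2r}$, with the weight $w(M)$ accounting for the stabilisers. Your version simply spells out the orbit--stabiliser bookkeeping that the paper leaves implicit in the phrase ``the weight takes care exactly of the number of times a cubic multigraph is produced in this process.''
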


\begin{proof}
A cubic multigraph can be modeled as a pairing of darts (half-edges), 3 for each vertex, with a total of $6r$ darts.
The number of such pairings is $(6r)! /((3r)! 2^{3r})$.
However, we have to divide by the number $6^{2r}$ of ways of permuting each of the $2r$ triples of darts. The weight takes care exactly of the number of times a cubic multigraph is produced in this process.
\end{proof}

The next result is essentially proved in \cite{giant}  using several algebraic manipulations. Here we present a concise proof. We denote by $[z^n]A(z)$ the coefficient of $z^n$ in the power series $A(z)$.

\begin{lemma}\label{le:counting}
The number $g(n,M,r)$ of simple graphs with $n$ vertices, $M$ edges and cubic kernel of size $2r$ satisfies
%$$            g(n,M,r) \le n! \,[z^n] U(z)^{n-M+ r} {e^{-T(z)/2-T(z)^2/4} \over \sqrt{1-T(z)} } {E_r \over (2r)!} {T(z)^{2r} \over (1-T(z))^{3r}}             $$
$$            g(n,M,r) \le n! \,[z^n] {U(z)^{n-M+ r} \over (n-M+r)!} \, e^{V(z)}  {E_r \over (2r)!} {T(z)^{2r} \over (1-T(z))^{3r}}             $$

            and
%$$             g(n,M,r) \ge n! \, [z^n] U(z)^{n-M+ r} {e^{-T(z)/2-T(z)^2/4} \over \sqrt{1-T(z)}} {E_r \over (2r)!} {T(z)^{5r} \over (1-T(z))^{3r}}.             $$
$$             g(n,M,r) \ge n! \, [z^n] {U(z)^{n-M+ r} \over (n-M+r)! } \, e^{V(z)} {E_r \over (2r)!} {T(z)^{8r} \over (1-T(z))^{3r}}.             $$

\end{lemma}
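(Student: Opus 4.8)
The plan is to build a single exponential generating function (EGF) that enumerates \emph{all} multigraphs with $n$ vertices, $M$ edges and cubic kernel of size $2r$, and then to sandwich the simple graphs between two truncations of it. Such a multigraph is assembled from five ingredients: a weighted cubic multigraph $K$ on $2r$ vertices (the kernel); a rooted tree planted at each vertex of $K$; a sequence of rooted trees strung along each edge of $K$, corresponding to the suspended trees hanging from the degree-two vertices that subdivide that edge; a collection of unicyclic components; and a collection of tree components. First I would translate each ingredient into generating functions. A kernel vertex together with its planted tree is exactly a rooted tree, contributing $T(z)$, so the $2r$ vertices give $T(z)^{2r}$. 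Each of the $3r$ edges of $K$ carries a possibly empty sequence of rooted trees, contributing $\sum_{k\ge 0}T(z)^k=(1-T(z))^{-1}$, so the edges give $(1-T(z))^{-3r}$. By Lemma~\ref{le:multigraph} the weighted kernels contribute $E_r/(2r)!$, the factor $(2r)!$ being the usual EGF normalisation of the vertex set, and substituting a rooted tree at each vertex is the EGF composition that turns the vertex atoms into $T(z)^{2r}$. Finally the unicyclic components contribute $e^{V(z)}$.

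The number of tree components is forced by an excess count. With the excess of a component defined as edges minus vertices, a tree has excess $-1$, a unicyclic component excess $0$, and the complex part built on a cubic kernel of size $2r$ has total excess $3r-2r=r$, since subdividing edges and attaching trees preserves excess. Summing over all components gives $M-n=r-t$, where $t$ is the number of tree components, so $t=n-M+r$; as an unordered set these contribute $U(z)^{n-M+r}/(n-M+r)!$. Multiplying the five factors and applying $n!\,[z^n]$ then gives the weighted number of \emph{all} multigraphs of the prescribed type, and a check of excesses confirms that every graph produced has exactly $n$ vertices and $M$ edges. The compensation factor carried by $E_r$ is what makes the count exact: the only configurations reproducing the same final graph are the two orientations of a subdivided loop and the permutations of subdivided parallel edges, and these are cancelled precisely by the weight $2^{-a}2^{-b}6^{-c}$, so each multigraph, and in particular each simple graph, is counted with total weight one.

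For the \textbf{upper bound} I would observe that every simple graph of the prescribed type occurs among the multigraphs enumerated above, since the construction may also create loops and repeated edges; hence $g(n,M,r)$ is at most the weighted count of all of them, which is the first inequality, with vertex factor $T(z)^{2r}$ and edge factor $(1-T(z))^{-3r}$. For the \textbf{lower bound} I would restrict the construction so that every edge of $K$ is subdivided by at least two degree-two vertices. This forces simplicity: a loop then expands to a cycle of length at least three, and two or three parallel edges expand to internally disjoint paths of length at least three, so the final graph has no loop and no repeated edge. Under the restriction each edge contributes $\sum_{k\ge 2}T(z)^k=T(z)^2/(1-T(z))$, turning the edge factor into $T(z)^{6r}/(1-T(z))^{3r}$ and, with the vertex factor $T(z)^{2r}$, the complex factor into $T(z)^{8r}/(1-T(z))^{3r}$. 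Since the number of subdivisions on each kernel edge is an invariant of the final graph, every simple graph obtained this way is counted exactly once and all lie in the family enumerated by $g(n,M,r)$, which gives the second inequality.

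The step requiring the most care, and the main obstacle, is the bookkeeping that makes the decomposition count each simple graph exactly once: the interaction between the compensation factor in $E_r$, the orientation of the tree sequences along edges, and the symmetries of loops and multiple edges, which is exactly what the weight $w(K)$ is designed to handle. Once this is in place the two bounds differ only in the admissible range of subdivisions per edge, and the generating-function manipulations are routine. A secondary point to verify is that two subdivisions per edge genuinely guarantee simplicity in the triple-edge case, the only situation where three parallel paths share both endpoints; this again follows from the same length-at-least-three argument.
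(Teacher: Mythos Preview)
Your proposal is correct and follows essentially the same route as the paper: the same decomposition into tree components, unicyclic components, and a complex part obtained from a weighted cubic kernel by planting rooted trees at vertices and stringing sequences of rooted trees along edges; the same excess count forcing $n-M+r$ tree components; the upper bound from allowing arbitrary (possibly empty) sequences; and the lower bound from requiring at least two subdivisions per edge, yielding the factor $T(z)^{8r}/(1-T(z))^{3r}$. Your discussion of the compensation factor and of the triple-edge case is more explicit than the paper's, but there is no substantive difference in method.
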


\begin{proof}
Such a graph is the union of a set of $s$ unrooted trees, a set of unicyclic
graphs, and a cubic multigraph $K$ with a rooted tree attached to each vertex of $K$ and a sequence (possibly empty) of rooted trees attached to each edge of $K$.
Let us see first that $s = n-M+r$. Indeed, the final excess of edges over vertices must be $M-n$. Each tree component contributes with excess~$-1$, each unicyclic component with excess $0$, and $K$ (together with the attached trees) with excess $r$. Hence
$ M-n = -s + r.$

The first two factors $U(z)^{n-M+ r}/(n-M+r)!$ and $ e^{V(z)}$ on the right-hand side  of the inequalities encode the set of trees and unicyclic components. The last part encodes the kernel $K$. It has~$2r$ vertices and is labelled, hence the  factor $E_r/(2r)!$;  the weighting guarantees that each graph contributing towards $g(n,M,r)$ is counted exactly once. The trees  attached to the $2r$ vertices  give a factor  $T(z)^{2r}$. The  sequences of trees attached to the $3r$ edges give each a factor $1/(1-T(z))$. However, this allows for the empty sequence and the resulting graph may not be simple, so we get only an upper bound. To guarantee that the final graph is simple we take  sequences of length at least two, encoded by $T(z)^2/(1-T(z))$ (length one is enough for multiple edges of $K$, but length two is needed for loops). Since this misses some graphs, we get a lower bound.
%Finally, we extract the $n$-coefficient and multiply by $n!$.
\end{proof}

The following technical result is essentially Lemma 3 from \cite{giant}.
 We reprove it here for completeness in a simplified version tailored to our needs (see also the proof of Theorem 5 in \cite{FKP89}).

\begin{lemma}\label{le:integral}
Let $M=\frac{n}{2}(1+\lambda n^{-1/3})$. Then for any fixed  $a$
 and integer $r>0$ we have
\begin{equation} \label{eq:estimate}
\frac{n!}{{ {n \choose 2} \choose M}}
[z^n] \frac{U(z)^{n-M+r}}{(n-M+r)!} \frac{T(z)^{a}}{(1-T(z))^{3r}} \,
e^{V(z)} = \sqrt{2 \pi} A\left(3r+\ts\frac{1}{2},\lambda\right)
 \left( 1 + O\left(\frac{\lambda^4}{n^{1/3}}\right)\right)
\end{equation}
uniformly for $|\lambda|\le n^{1/12}$, where
\begin{equation} \label{DEFAIRY}
A(y,\lambda)={e^{-\lambda^3\!/6}\over3^{(y+1)/3}}\sum_{k\ge0}
{\bigl({1 \over 2} 3^{2/3}\lambda\bigr)^k\over k!\,\Gamma\bigl((y+1-2k)/3\bigr)}.
\end{equation}
\end{lemma}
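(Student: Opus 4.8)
The plan is to express the coefficient $[z^n]$ as a Cauchy integral and evaluate it by the saddle-point method, in the coalescing regime that produces Airy-type asymptotics. First I would write, with $s = n-M+r$,
$$[z^n] \frac{U(z)^{n-M+r}}{(n-M+r)!}\frac{T(z)^a}{(1-T(z))^{3r}}e^{V(z)} = \frac{1}{2\pi i}\oint \frac{1}{z^{n+1}}\frac{U(z)^{s}}{s!}\frac{T(z)^a}{(1-T(z))^{3r}}e^{V(z)}\,dz,$$
and then perform the tree substitution $z = t e^{-t}$, so that $t = T(z)$, $dz = (1-t)e^{-t}\,dt$, and every series becomes explicit: $U = t - t^2/2$, $1-T = 1-t$, and $e^{V} = e^{-t/2-t^2/4}/\sqrt{1-t}$. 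Collecting powers, the integral takes the form $\frac{1}{s!\,2\pi i}\oint t^{s+a-n-1}(1-t/2)^{s}(1-t)^{1/2-3r}\,e^{(n-1/2)t - t^2/4}\,dt$, a contour integral with a large parameter $n$ and an algebraic singularity at $t=1$.

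The next step is the local saddle-point analysis. Writing the smooth part as $e^{\phi(t)}$ and using $s = \tfrac n2 - \tfrac\lambda2 n^{2/3} + r$, a short computation shows that $\phi'(1)$ and $\phi''(1)$ vanish at leading order, so $t=1$ is a \emph{double} saddle that coalesces with the singularity $(1-t)^{1/2-3r}$; this coalescence is exactly what produces an Airy-type answer. I would therefore zoom in by setting $t = 1 - c\,n^{-1/3} w$ for a suitable constant $c$, and Taylor-expand $\phi$ about $t=1$. In the scaled variable the linear coefficient is $O(1)$ (hence negligible once multiplied by $n^{-1/3}$), the quadratic coefficient is governed by $\tfrac\lambda2 n^{2/3}$ and contributes a term $\sim \lambda w^2$, and the cubic coefficient is governed by $\tfrac n3$ and contributes a term $\sim w^3$; all three survive because of the $n^{-1/3}$ scaling, while the singular factor $(1-t)^{1/2-3r}$ becomes $\sim w^{1-y}$ with $y = 3r + \tfrac12$.

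The resulting local integral has the shape $\int w^{1-y}\exp(\alpha\lambda w^2 - \beta w^3)\,dw$ over a Hankel-type contour. I would expand $\exp(\alpha\lambda w^2)$ as a power series and integrate term by term, using the Hankel representation $\frac{1}{\Gamma(\gamma)} = \frac{1}{2\pi i}\int_{\mathcal H} \zeta^{-\gamma}e^{\zeta}\,d\zeta$ together with the substitution $\zeta = \beta w^3$. The $k$-th term carries $w^{2k+1-y}$, which maps precisely to $1/\Gamma((y+1-2k)/3)$, reproducing the series $(\frac12 3^{2/3}\lambda)^k / (k!\,\Gamma((y+1-2k)/3))$: the powers of $3$ (including $3^{-(y+1)/3}$ and $3^{2/3}$) come from $\zeta = \beta w^3$, the factor $e^{-\lambda^3/6}$ from evaluating the cubic exponent at the $\lambda$-shifted saddle, and the overall $\sqrt{2\pi}$ from combining Stirling's formula for $n!$, $s!$ and $\binom{\binom n2}{M}$ with the scaling Jacobian $n^{-1/3}$. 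This yields exactly $\sqrt{2\pi}\,A(3r+\tfrac12,\lambda)$.

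The main obstacle is the error control and uniformity. One must bound the quartic-and-higher terms of the Taylor expansion of $\phi$, the subleading parts of the quadratic and cubic coefficients, and the contribution of the contour away from $t=1$, all uniformly in $\lambda$. Tracking the largest such correction shows it is of relative size $O(\lambda^4/n^{1/3})$, which stays $o(1)$ precisely when $|\lambda| \le n^{1/12}$; making these tail estimates rigorous and uniform is the delicate part, and is where I would spend most of the effort, following the saddle-point tail bounds of \cite{FKP89} and \cite{giant}.
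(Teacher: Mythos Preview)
Your proposal follows essentially the same route as the paper: substitute $u=T(z)$, recognize the coalescing-saddle structure at $u=1$ (where $h$, $h'$ and, at the critical $M=n/2$, also $h''$ vanish), and zoom in at scale $n^{-1/3}$. Two implementation differences are worth noting. First, rather than deriving the series for $A(y,\lambda)$ by expanding $\exp(\alpha\lambda w^2)$ and integrating termwise against a Hankel kernel, the paper invokes directly the \emph{integral} representation
\[
A(y,\lambda)=\frac{1}{2\pi i}\int_\Pi s^{1-y}e^{K(\lambda,s)}\,ds,\qquad K(\lambda,s)=\frac{s^3}{3}+\frac{\lambda s^2}{2}-\frac{\lambda^3}{6},
\]
and simply matches the local integral to it; this is shorter and sidesteps the termwise-convergence justification. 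Second, the paper does not use your linear scaling $t=1-cn^{-1/3}w$ with a generic $c$, but the exponential one $u=e^{-s\nu}$, $\nu=n^{-1/3}$, with $\Re s=\alpha$ fixed by $\lambda=\alpha^{-1}-\alpha$; this particular $\alpha$ is singled out as needed to make the tail bounds uniform in $\lambda$, which is precisely the ``delicate part'' you identify. One small correction: in the paper the factor $e^{-\lambda^3/6}$ comes from the Stirling estimate of $n!\big/\bigl(\binom{\binom n2}{M}(n-M+r)!\bigr)$, not from evaluating the cubic exponent at a shifted saddle.
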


\begin{proof} The proof is based on relating the left-hand side of Equation~\eqref{eq:estimate} to the integral representation of $A(y,\lambda )$ defined in~\cite[Equation(10.7)]{giant}:
$$
A(y,\lambda)={1\over2\pi i}\int_{\Pi} s^{1-y}e^{K(\lambda,s)} ds,
$$
where $K(\lambda,s)$ is the polynomial
$$
K(\lambda,s)={(s+\lambda)^2(2s-\lambda)\over6}=
{s^3\over3}+{\lambda s^2\over2}-{\lambda^3\over6}
$$
and $\Pi$ is a path in the complex plane that consists of the following three straight line segments:
$$
s(t)=\left\{\begin{array}{rr}
        -e^{-\pi i/3}\,t,& \textrm{for} -\infty<t\le-2,\\
       1+it\sin\pi/3,& \textrm{for} -2\le t\le+2,\\
        e^{+\pi i/3}\,t, & \textrm{for} +2\le t<+\infty.
\end{array}\right.
$$
The constant term in the left-hand side of %Equation~
\eqref{eq:estimate} is estimated using Stirling's formula, getting
\begin{equation}\label{STIRLING}
\frac{n!}{{ {n \choose 2} \choose M}} \, \frac{1}{(n-M+r)!}
= \sqrt{2 \pi n} \,\frac{2^{n-M+r}}{n^r}  e^{-\lambda^3/6 + 3/4 - n}
 \left( 1+O\left(\frac{\lambda^4}{n^{1/3}}\right)\right).
\end{equation}
The coefficient of $[z^n]$ in Equation~\eqref{eq:estimate} is estimated by means of a contour integral around $z=0$, using the expressions of $U(z)$ and $V(z)$ in terms of $T(z)$:
$$
\frac{1}{2 \pi i} \oint \left(T(z) - \frac{T(z)^2}{2}\right)^{n-M+r}
\frac{T(z)^a \,  e^{-T(z)/2-T(z)^2/4} }{(1-T(z))^{3r+1/2}} \frac{dz}{z^{n+1}}.
$$
We make the change of variable $u=T(z)$, whose inverse is $z=ue^{-u}$, and we obtain
\begin{equation}\label{cauchy}
\frac{2^{M-n-r} e^{n}}{2 \pi i} \oint g(u) \, e^{n h(u)} \frac{du}{u},
\end{equation}
where the integrand is split into a smooth function
$$
g(u) = \frac{u^{a} (2u-u^2)^{r}  e^{-u/2-u^2/4}}{(1-u)^{3r-1/2}}
$$
and a large power involving
$$
h(u) = u-1-\log u-\left(1-{M\over n}\right)
\log{1\over 1-(u-1)^2}.
$$
The contour path in Equation~\eqref{cauchy} should be such that
$|u|<1$.  As remarked in~\cite{giant} (see also~\cite{FKP89}), the function $h(u)$ satisfies
$h(1)=h'(1)=0$. Moreover, precisely at the critical value
$M= n/2$ we also have $h''(1)=0$. This triple zero shows up
in the procedure  used in \cite{giant} when estimating (\ref{cauchy}) for large~$n$ by means of the saddle-point method.
Notice that $h(u)$ is singular at $u=1$, due to the singularity at $z=e^{-1}$ of $T(z)$.

Let $\nu=n^{-1/3}$, and let $\alpha$ be the positive solution to
$$
\lambda = \alpha^{-1} - \alpha.
$$
This choice is necessary in order to get precise bounds for the tail estimates that appear using the saddle-point method. Following the proof of~\cite[Lemma 3]{giant}, we evaluate (\ref{cauchy}) on the
path  $u=e^{-(\alpha+it)\nu}$,
where $t$ runs from $-\pi n^{1/3}$ to $\pi n^{1/3}$. That is,
$$
\oint f(u){du\over u}=i\nu \, \int_{-\pi n^{1/3}}^{\pi n^{1/3}}
f(e^{-(\alpha+it)\nu})\,dt.
$$
The main contribution to the value of this integral
comes from the vicinity of $t=0$. The magnitude of
$e^{h(u)}$ depends on the real part $\Re h(u)$. Observe that
 $\Re h(e^{-(\alpha+it)\nu})$ decreases as $|t|$
increases, and  that $|e^{nh(u)}|$ has its
 maximum on the circle $u=e^{-(\alpha+it)\nu}$
when $t=0$. We write $s=\alpha+it$. Analyzing  $nh(e^{-s\nu})$, we have
$$
n\,h(e^{-s\nu})=\frac{s^3} {3}+
\frac{\lambda s^2}{2}
+O\bigl((\lambda^2s^2+s^4)\nu
\bigr),
$$
uniformly in any region such that $|s\nu| < \log 2$. For the function $g(u)$, we have
$$
g\left(e^{-s\nu}\right) = \frac{\left(2e^{-s\nu} - e^{-2s\nu}\right)^r}{\left(1-e^{-s\nu}\right)^{3r-1/2}} \,
 e^{-s\nu a - e^{-s\nu}/2 - e^{-2s\nu}/4}
   = (s\nu)^{1/2-3r} e^{-3/4} \left( 1 + O(s\nu)\right).
$$
If $f(u)=g(u)e^{nh(u)}$ is the integrand of~(\ref{cauchy}), we have
$$
e^{-{\lambda^3/6}} f(e^{-s\nu}) = e^{-3/4} \nu^{1/2-3r} s^{1-(3r+1/2)} e^{K(\lambda,s)} \left(1+O(s\nu)+O(\lambda^2s^2\nu)+O(s^4\nu) \right)
$$
when $s=O(n^{1/12})$. Finally,
$$
\frac{e^{-\lambda^3/6}}{2\pi i} \oint f(u) \frac{du}{u} = e^{-3/4} \nu^{3/2- 3r} A(3r+\ts\frac{1}{2},\lambda) +
O(\nu^{5/2-3r} e^{-\lambda^3/6} \lambda^{3r/2+1/4}),
$$
where the error term has been derived from those already in~\cite{giant}. The proof of the lemma is completed by multiplying~(\ref{STIRLING}) and (\ref{cauchy}), and canceling equal terms.
%the terms on $e^{\pm 3/4}$, $e^{\pm n}$, $e^{\pm \lambda^3/6}$, $2^{\pm (n-M+r)}$, $n^{\pm r}$ and $n^{\pm 1/6}$.
\end{proof}

It is important to notice that in the previous lemma the final asymptotic estimate does \emph{not} depend on the choice of $a$. The next result is a direct consequence and can be found as Formula (13.17) in \cite{giant}.

\begin{lemma}\label{le:airy}
The limiting probability that the random graph $G(\lambda)$ has a cubic kernel of size $2r$ is equal to
$$
    \sqrt{2 \pi} \, e_r A(3r+\ts{1\over2}, \lambda),
$$
where $e_r = E_r /(2r)!$ and $A(y,\lambda)$ is as in the previous lemma.

In particular, for $\lambda=0$ the limiting probability is
$$
    \sqrt{2 \over 3} \left({4\over3}\right)^r e_r {r! \over (2r)!}.
$$
\end{lemma}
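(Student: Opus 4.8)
The plan is to express the probability as a ratio of counts, squeeze it between the two bounds of Lemma~\ref{le:counting}, and then invoke Lemma~\ref{le:integral}, crucially exploiting that the estimate there does not depend on the exponent~$a$.

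Since $\binom{\binom{n}{2}}{M}$ is the total number of simple graphs on $n$ labelled vertices with $M$ edges, the probability that $G(\lambda)$ has a cubic kernel of size $2r$ is exactly $g(n,M,r)\big/\binom{\binom{n}{2}}{M}$. Dividing both inequalities of Lemma~\ref{le:counting} by $\binom{\binom{n}{2}}{M}$ and writing $e_r=E_r/(2r)!$, the two resulting outer expressions are precisely the left-hand side of~\eqref{eq:estimate} multiplied by $e_r$, taken with $a=8r$ (lower bound) and $a=2r$ (upper bound). By Lemma~\ref{le:integral} both of these equal
$$
\sqrt{2\pi}\,e_r\,A\!\left(3r+\tfrac12,\lambda\right)\left(1+O\!\left(\tfrac{\lambda^4}{n^{1/3}}\right)\right),
$$
the estimate being the same for both values of $a$. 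For fixed $\lambda$ the error term tends to $0$ as $n\to\infty$, so the two bounds close and the squeeze gives
$$
\lim_{n\to\infty}\Pr\{\,G(\lambda)\text{ has cubic kernel of size }2r\,\}=\sqrt{2\pi}\,e_r\,A\!\left(3r+\tfrac12,\lambda\right),
$$
which is the first assertion.

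For the special case $\lambda=0$ I would substitute directly into~\eqref{DEFAIRY}. Because $\bigl(\tfrac12 3^{2/3}\lambda\bigr)^k$ vanishes for every $k\ge1$ when $\lambda=0$, only the term $k=0$ survives, so
$$
A(y,0)=\frac{1}{3^{(y+1)/3}\,\Gamma\!\bigl((y+1)/3\bigr)}.
$$
Setting $y=3r+\tfrac12$ gives $(y+1)/3=r+\tfrac12$, hence $A(3r+\tfrac12,0)=1\big/\bigl(3^{\,r+1/2}\Gamma(r+\tfrac12)\bigr)$. Finally I would use the identity $\Gamma(r+\tfrac12)=(2r)!\sqrt{\pi}\big/(4^r r!)$ and substitute into $\sqrt{2\pi}\,e_r\,A(3r+\tfrac12,0)$; simplifying $\sqrt{2\pi}/\sqrt{\pi}=\sqrt2$, writing $4^r/3^r=(4/3)^r$ and $3^{-1/2}=1/\sqrt3$, this collapses to $\sqrt{2/3}\,(4/3)^r e_r\,r!/(2r)!$, as stated.

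There is no serious obstacle here, since everything reduces to the two preceding lemmas. The only point deserving care is the squeeze: the upper and lower bounds of Lemma~\ref{le:counting} differ in the power of $T(z)$ (namely $a=2r$ versus $a=8r$), and a priori this could produce a multiplicative gap in the limit. What makes the argument work is precisely the remark following Lemma~\ref{le:integral}, that the asymptotic estimate is independent of $a$; this forces both bounds to the same value and pins down the limiting probability exactly. The gamma-function evaluation in the second part is then a routine simplification.
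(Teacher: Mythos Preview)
Your argument is correct and follows essentially the same route as the paper: express the probability as $g(n,M,r)\big/\binom{\binom{n}{2}}{M}$, squeeze using the two bounds of Lemma~\ref{le:counting}, and apply Lemma~\ref{le:integral} to see that both bounds converge to $\sqrt{2\pi}\,e_r\,A(3r+\tfrac12,\lambda)$ because the estimate is independent of~$a$. Your explicit evaluation of $A(3r+\tfrac12,0)$ via $\Gamma(r+\tfrac12)=(2r)!\sqrt{\pi}/(4^r r!)$ spells out the $\lambda=0$ case that the paper leaves implicit.
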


\begin{proof}
Using the notation of Lemma \ref{le:counting}, the probability for a given $n$ is by definition
$$
    {g(n,M,r) \over {{n \choose 2} \choose M} }.
$$
Lemma \ref{le:counting} gives upper and lower bounds for this probability, and
Using Lemma \ref{le:integral} we see that both bounds agree in the limit and are equal to
$$
    {E_r \over (2r)!}  \sqrt{2 \pi}  A(3r+\ts{1\over2},\lambda),
$$
thus proving the result.
A key point is  that the discrepancy between the factors $T(z)^{2r}$ and $T(z)^{5r}$ in the bounds for $g(n,M,r)$ does not affect the limiting value of the probability.
\end{proof}

Notice that if we replace the $e_r$ by the numbers $g_r$ arising by counting  \emph{planar} cubic multigraphs, we obtain immediately the probability that $G(\lambda)$ has a cubic planar kernel of size~$2r$. Since $G(\lambda)$ is planar if and only if its kernel is planar, we can use this fact to compute the probability of~$G(\lambda)$ being planar. But first we must compute $g_r$.

%%%%%%%%%%%%%%%%%%%%%%%%%%%%%%
\section{Planar cubic multigraphs}\label{sec:cubic}
%%%%%%%%%%%%%%%%%%%%%%%%%%%%%%

In this section we compute the numbers $G_r$ of cubic planar multigraphs of size $2r$. The associated generating function  has been obtained recently by Kang and \L uczak \cite{Kang-Luczak} (generalizing the enumeration of simple cubic graphs in \cite{cubic-planar}), but their derivation contains some minor errors. They do not affect the correctness of \cite{Kang-Luczak}, since the asymptotic estimates needed by the authors are still valid. However, for the computations that follow we need the exact values. The next result is from \cite{Kang-Luczak}, with the corrections mentioned below.
All multigraphs are weighted as in the previous section.

\begin{lemma}\label{le:cubic-planar}
Let $G_1(z)$ be the generating function of connected cubic planar multigraphs. Then~$G_1(z)$ is determined by the following system of equations:
$$
\renewcommand{\arraystretch}{1.5}
\begin{array}{lll}%\label{eq:system-planar}
3z \ds\frac{d G_1(z)}{dz}&=&D(z)+C(z)\\
B(z)&=& \ds\frac{z^2}{2}(D(z)+C(z))+\frac{z^2}{2}\\
C(z)&=& S(z)+P(z)+H(z)+B(z)\\
D(z)&=& \ds\frac{B(z)^2}{z^2}\\
S(z)&=& C(z)^2-C(z)S(z)\\
P(z)&=&z^2C(z)+\ds\frac{1}{2}z^2C(z)^2+\ds\frac{z^2}{2}\\
2(1+C(z))H(z) &=&  u(z)(1-2u(z))-u(z)(1-u(z))^3\\
z^2(C(z)+1)^3&=& u(z)(1-u(z))^3.
\end{array}
$$
\end{lemma}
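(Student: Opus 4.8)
The plan is to reconstruct the system through the classical \emph{network decomposition} used to enumerate 2-connected planar graphs by their 3-connectivity structure, specialized to cubic multigraphs and the planar case, following \cite{Kang-Luczak} and the treatment of simple cubic planar graphs in \cite{cubic-planar}. The pivotal object is a \emph{network}: a connected cubic planar multigraph carrying two distinguished poles of degree two, so that identifying the poles with the endpoints of a new edge yields a genuine cubic graph. I would take $C(z)$ to be the generating function of such networks (with $z$ marking vertices), carrying the compensation-factor weights for loops and multiple edges exactly as in Section~\ref{sec:prelim}. Every network is then either a \emph{series} network, a \emph{parallel} network, or an \emph{h-network} inherited from a 3-connected core, plus small base cases; this is the content of $C = S + P + H + B$.

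First I would derive the series and parallel equations, which are purely combinatorial. A series network is an ordered sequence of at least two networks joined end to end, subject to the usual restriction that no constituent is itself a series network; bookkeeping this sequence gives $S = C^2 - CS$, equivalently $S = C^2/(1+C)$. A parallel network consists of two or three networks sharing the two poles, the symmetry of identical parallel branches producing the quadratic term and the small cases the constant terms; the factors $z^2$ record that the two poles are promoted to honest degree-three vertices. The auxiliary series $B$ and $D$ perform the analogous bookkeeping for the closure of a network into an edge-rooted structure, with $D = B^2/z^2$ expressing a parallel pairing; these are routine once the pole conventions are fixed.

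The hard part, exactly as in \cite{Kang-Luczak}, is the 3-connected contribution $H$. Here I would invoke Whitney's theorem: a 3-connected planar graph has an essentially unique embedding, so 3-connected cubic planar multigraphs are in bijection, through planar duality, with 3-connected planar triangulations (the dual of a cubic map is a triangulation, and 3-connectedness is self-dual). The generating function of rooted triangulations is algebraic (Tutte), and this is the origin of the parameter $u(z)$: the relation $z^2(C+1)^3 = u(1-u)^3$ is Tutte's algebraic equation for triangulations, transported across the duality and across the substitution of networks into the edges of the 3-connected core, while $2(1+C)H = u(1-2u) - u(1-u)^3$ extracts the h-network contribution from the rooted-triangulation count. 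Getting the precise algebraic form of the $u$-equation together with the symmetry and rooting constants correct is the main obstacle, and it is exactly here that the minor errors of \cite{Kang-Luczak} must be repaired before the \emph{exact} coefficients can be trusted.

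Finally, I would recover $G_1$ by a rooting argument. Since a connected cubic multigraph on $2r$ vertices has $6r$ darts, the operator $3z\,\frac{d}{dz}$ marks a dart, so $3z\,G_1'(z)$ enumerates dart-rooted connected cubic planar multigraphs. Splitting according to the local structure at the root edge yields the two cases counted by $D$ and $C$, giving $3z\,G_1' = D + C$; integrating recovers $G_1$ up to the constant term, which is fixed by $G_1(0)=0$. A check of the first few coefficients against small cubic planar multigraphs enumerated by hand would confirm the corrected system.
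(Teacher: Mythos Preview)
Your outline is correct and follows exactly the approach the paper adopts: the paper does not give an independent proof of this lemma but refers to \cite{Kang-Luczak} for the combinatorial interpretation and derivation of the system, noting only the constant-term corrections needed in four of the equations. Your sketch---network decomposition $C=S+P+H+B$ with the series/parallel equations, the 3-connected contribution $H$ obtained via Whitney's theorem and planar duality to Tutte's rooted triangulations (whence the algebraic parameter $u$), and the dart-rooting identity $3z\,G_1'=D+C$---is precisely the Kang--\L uczak argument that the paper cites.
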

The generating functions  $B(z),\, C(z),\, D(z),\, S(z),\, P(z)$ and $H(z)$ correspond to distinct families of edge-rooted cubic planar graphs, and $u(z)$ is an algebraic function related to the enumeration of 3-connected cubic planar graphs (dually, 3-connected triangulations).

The corrections with respect to \cite{Kang-Luczak} are the following. In the first equation a term $-7z^2/24$ has been removed. In the second and sixth equations we have replaced a term $z^2/4$ by $z^2/2$. In the fourth equation we have removed a term $-z^2/16$.
For the combinatorial interpretation of the various generating functions and the  proof of the former equations we refer to \cite{Kang-Luczak}.
Notice that eliminating $u(z)$ from the last two equations we obtain a relation between $C(z)$ and~$H(z)$. This relation can be used to obtain
a single equation satisfied by $C(z)$, eliminating  from the remaining equations. We  reproduce it here in case the reader wishes to check our computations.
%Remark that all the functions in the previous lemma are algebraic.
$$
\begin{array}{ll}
1048576\,{z}^{6}+1034496\,{z}^{4}-55296\,{z}^{2}+ \left( 9437184\,{z}^
{6}+6731264\,{z}^{4}-1677312\,{z}^{2}+55296 \right) C+ \\
\left( 37748736
\,{z}^{6}+18925312\,{z}^{4}-7913472\,{z}^{2}+470016 \right) {C}^{2}+ \\
 \left( 88080384\,{z}^{6}+30127104\,{z}^{4}-16687104\,{z}^{2}+1622016
 \right) {C}^{3}+ \\
 \left( 132120576\,{z}^{6}+29935360\,{z}^{4}-19138560
\,{z}^{2}+2928640 \right) {C}^{4}+ \\ \left( 132120576\,{z}^{6}+19314176
\,{z}^{4}-12429312\,{z}^{2}+2981888 \right) {C}^{5}+ \\
\left( 88080384\,
{z}^{6}+8112384\,{z}^{4}-4300800\,{z}^{2}+1720320 \right) {C}^{6}+ \\
 \left( 37748736\,{z}^{6}+2097152\,{z}^{4}-614400\,{z}^{2}+524288
 \right) {C}^{7}+ \\ \left( 9437184\,{z}^{6}+262144\,{z}^{4}+65536
 \right) {C}^{8}+1048576\,{C}^{9}{z}^{6} = 0.
 \end{array}
$$
The first terms are
$$
    C(z) = {z}^{2}+{\frac {25}{8}}\,{z}^{4}+{\frac {59}{4}}\,{z}^{6}+{\frac {
11339}{128}}\,{z}^{8}+ \cdots
%{\frac {4987}{8}}\,{z}^{10}+{\frac {5014413}{1024}}\,{z}^{12}+ \cdots
$$
This allows us to compute $B(z),D(z), S(z), P(z)$ and $H(z)$, hence also $G_1(z)$.
The first coefficients of $G_1(z)$ are as follows.
$$
G_1(z) = \frac{5}{24}{z}^{2}+{\frac {5}{16}}{z}^{4}+{\frac
{121}{128}}{z}^{6}+{\frac {1591}{384}}{z}^{8}+\cdots
%{\frac {351869}{15360}}{z}^{10}+{\frac {28451}{192}}{z}^{12} + \cdots
$$
Using the set construction, the generating function $G(z)$ for cubic planar multigraphs is then
\begin{equation} \label{coeff-planar}
G(z) = e^{G_1(z)} = \sum_{r=0}^{\infty} G_r {z^{2r} \over (2r)!} =
1+{\frac {5}{24}}{z}^{2}+{\frac {385}{1152}}{z}^{4}+{\frac {83933}{
82944}}{z}^{6}+{\frac {35002561}{7962624}}{z}^{8} + \cdots,
\end{equation}
where $G_r$ is the  number of planar cubic multigraphs with $2r$ vertices.
This coincides with the generating function for all cubic (non-necessarily planar) multigraphs up to the coefficient of~$z^4$.
%$$
%\begin{equation} \label{coeff-all}
%\sum_{r=0}^{\infty} E_r {z^{2r} \over (2r)!} =
%1+{\frac {5}{24}}\,{z}^{2}+{\frac {385}{1152}}\,{z}^{4}+{\frac {85085}
%{82944}}\,{z}^{6}+{\frac {37182145}{7962624}}\,{z}^{8} + \cdots
%\end{equation}
%$$
The first discrepancy is in the coefficient of $z^6$. The difference between the coefficients is $1/72=10/6!$, corresponding to the 10 possible ways of labelling  $K_{3,3}$, the unique non-planar cubic multigraph on six vertices.

%%%%%%%%%%%%%%%%%%%%%%%%%%%%%%
\section{Probability of planarity and generalizations}\label{sec:main}
%%%%%%%%%%%%%%%%%%%%%%%%%%%%%%

Let $G$ be a graph with a cubic kernel $K$. Then  clearly  $G$ is planar if and only if $K$ is planar, and we can compute the probability that $G(n,M)$ is planar by counting over all possible planar kernels.

\begin{theorem}\label{th:planar}
Let $g_r(2r)!$ be the number of cubic planar multigraphs with $2r$ vertices.
Then the limiting probability that the random graph $G(n,M={n \over 2}(1+\lambda n^{-1/3}))$ is planar is
$$
             p(\lambda) =  \sum_{r \ge 0} \sqrt{2 \pi} \, g_r A(3r+\ts{1\over2},\lambda).
$$
In particular, the limiting probability that $G(n, {n \over 2})$ is planar is
$$
    p(0) =  \sum_{r \ge 0} \sqrt{2 \over 3} \left(\frac{4}{3}\right)^r g_r {r! \over (2r)!} \approx
    0.99780.
$$
\end{theorem}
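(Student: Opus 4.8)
The plan is to reduce the planarity of $G(\lambda)$ to that of its kernel and then reuse the machinery already assembled in Lemmas \ref{le:kernel}--\ref{le:airy}. First I would record the basic equivalence. Deleting a vertex of degree one and suppressing a vertex of degree two (replacing two incident edges by a single edge) both preserve planarity in each direction, and attaching rooted trees to a planar graph keeps it planar; moreover every tree and unicyclic component is itself planar. Hence $G(\lambda)$ is planar if and only if its kernel $K(\lambda)$ is planar. By Lemma \ref{le:kernel} the kernel is a.a.s.\ a cubic multigraph, so up to an event of probability $o(1)$ the event $\{G(\lambda)\text{ planar}\}$ is the disjoint union, over $r\ge 0$, of the events $\{G(\lambda)\text{ has a planar cubic kernel of size }2r\}$.

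Next I would compute, for each fixed $r$, the limiting probability of the $r$-th event. This is exactly the computation behind Lemma \ref{le:airy}, with a single change: in the counting Lemma \ref{le:counting} we restrict the kernel multigraph $K$ to be \emph{planar}, so the factor $E_r$ counting all cubic multigraphs on $2r$ vertices is replaced by $G_r=g_r(2r)!$, the weighted count of planar cubic multigraphs from Section \ref{sec:cubic}. Neither the contour estimate of Lemma \ref{le:integral}, which is insensitive to the power $a$ of $T(z)$ as noted after its proof, nor the sandwiching argument of Lemma \ref{le:airy} uses anything about $K$ beyond its $2r$ vertices and $3r$ edges. Thus the upper and lower bounds for the restricted count of graphs again converge to the same value, namely $\sqrt{2\pi}\,g_r A(3r+\ts\frac12,\lambda)$.

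It then remains to sum over $r$ and to justify exchanging the limit $n\to\infty$ with the infinite sum; this is the one genuinely delicate point. Here I would use the domination $g_r\le e_r$: the termwise limits for the planar kernels are bounded by the all-cubic probabilities $\sqrt{2\pi}\,e_r A(3r+\ts\frac12,\lambda)$ of Lemma \ref{le:airy}, which are nonnegative and sum to $1$, since they constitute the limiting distribution of the kernel size, which is a.a.s.\ finite by properties 1 and 4. The dominating series therefore converges, and a dominated-convergence argument -- equivalently, fixing $R$ so that the tail $\sum_{r>R}$ of the dominating series lies below a prescribed $\varepsilon$ and then passing to the limit in the finitely many terms $r\le R$ individually -- yields
$$
p(\lambda)=\sum_{r\ge 0}\sqrt{2\pi}\,g_r\,A\bigl(3r+\ts\tfrac12,\lambda\bigr).
$$

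Finally, for $\lambda=0$ I would substitute the explicit special value of $A(3r+\ts\frac12,0)$ recorded in Lemma \ref{le:airy}, turning the $r$-th term into $\sqrt{2/3}\,(4/3)^r g_r\,r!/(2r)!$, and evaluate the series numerically. The values $g_r=G_r/(2r)!$ are read off from the generating function $G(z)=e^{G_1(z)}$ computed in \eqref{coeff-planar}; the factor $r!/(2r)!$ decays factorially, so only a few terms are needed to reach the stated precision $p(0)\approx 0.99780$. I expect the main obstacle to be precisely the limit/sum interchange, but once the uniform tail bound furnished by $g_r\le e_r$ is in place, the remainder is substitution into Lemma \ref{le:airy} and numerical evaluation.
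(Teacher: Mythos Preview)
Your proposal is correct and follows the same route as the paper: reduce planarity of $G(\lambda)$ to planarity of its kernel, replace $e_r$ by $g_r$ in the analysis behind Lemma~\ref{le:airy}, and sum over $r$. If anything, you are more careful than the paper, which simply says ``summing over all possible $r$'' without addressing the limit/sum interchange; your tail argument via the domination $g_r\le e_r$ and the fact that the $e_r$-series sums to~$1$ fills that gap cleanly.
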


\begin{proof}
The same analysis as in Section \ref{sec:prelim} shows that
$\sqrt{2 \pi} \, g_r A(3r+\ts{1\over2},\lambda)$ is the probability that the kernel is planar and has $2r$ vertices. Summing over all possible $r$, we get the desired result.
\end{proof}

As already mentioned, Erd\H{o}s and R\'enyi \cite{erdos} conjectured that $p(0)$ exists and $0 < p(0) < 1$. This was proved in \cite{LuczakStruct}, showing  that  $p(\lambda)$ exists for all $\lambda$ and that $0 < p(\lambda) < 1$.
The bounds in \cite{giant} for $p(0)$ are
$$
            0.98707 < p(0) < 0.99977,
$$
obtained by considering connected cubic multigraphs with at most six vertices.
We remark that Stepanov \cite{stepanov} showed that $p(\lambda) < 1$ for $\lambda \le 0$ (without actually establishing the existence of the limiting probability).
The function $p(\lambda)$ is plotted in Figure 1.
As expected, $p(\lambda)$ is close to 1 when $\lambda\to -\infty$ and close
to 0 when $\lambda\to \infty$. For instance, $p(-3) \approx 1- 1.02 \cdot 10^{-7}$ and $p(5) \approx 4.9 \cdot 10^{-7}$.

Besides planar graphs, one can consider other classes of graphs.
Let $\mathcal{G}$ be a class of graphs closed under taking minors, that is, if $H$ is a minor of $G$ and $G\in\mathcal{G}$, then $H \in \mathcal{G}$.
If $H_1,\cdots, H_k$ are the excluded minors of $\mathcal{G}$, then we write $\mathcal{G} = \hbox{Ex}(H_1,\dots,H_k)$. (By the celebrated theorem of Robertson and Seymour, the number of excluded minors is finite, but we do not need this deep result here).
The following result  generalizes the previous theorem.

\begin{theorem}
Let $\mathcal{G}=\hbox{Ex}(H_1,\dots,H_k)$ and assume all the $H_i$ are 3-connected.
Let  $h_r(2r)!$ be the number of cubic multigraphs in $\mathcal{G}$ with $2r$ vertices.
Then the limiting probability that the random graph $G(n,M={n \over 2}(1+\lambda n^{-1/3}))$ is in $\mathcal{G}$ is
$$
             p_{\mathcal{G}}(\lambda) =  \sum_{r \ge 0} \sqrt{2 \pi} \, h_r A(3r+\ts{1\over2},\lambda).
$$
In particular, the limiting probability that $G(n, {n \over 2})$ is in $\mathcal{G}$ is
$$
    p_{\mathcal{G}}(0) =  \sum_{r \ge 0} \sqrt{2 \over 3} \left(\frac{4}{3}\right)^r h_r {r! \over (2r)!}.
$$
Moreover, for each $\lambda$ we have
$$
    0<  p_{\mathcal{G}}(\lambda) < 1.
$$
\end{theorem}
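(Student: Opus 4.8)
The plan is to mirror the proof of Theorem \ref{th:planar} almost verbatim, since the only structural input that changes is which cubic multigraphs are permitted in the kernel. The crucial observation is that the hypothesis that every excluded minor $H_i$ is 3-connected makes membership in $\mathcal{G}$ a purely \emph{kernel-level} property: a graph $G$ belongs to $\mathcal{G}$ if and only if its kernel $K$ belongs to $\mathcal{G}$. First I would establish this equivalence. The one-parameter operations that produce $G(\lambda)$ from its kernel --- attaching rooted trees at vertices and inserting sequences of rooted trees along edges (i.e.\ subdividing edges and hanging trees) --- only add vertices of degree one and two and subdivide existing edges. None of these operations can create a 3-connected minor that was not already topologically present in $K$, because suppressing degree-two vertices and deleting the pendant trees recovers $K$ as a minor of $G$, and conversely any 3-connected minor of $G$ is already a minor of $K$. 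Hence $G(\lambda)$ excludes all the $H_i$ exactly when $K(\lambda)$ does, which is what we need.

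Granting this, the counting argument is identical to that of Lemma \ref{le:airy} and Theorem \ref{th:planar}, with the count $e_r$ of all cubic multigraphs replaced by the count $h_r = H_r/(2r)!$ of those lying in $\mathcal{G}$. Concretely, I would revisit Lemma \ref{le:counting}: the factor $E_r/(2r)!$ encoding the kernel is simply replaced by $H_r/(2r)!$, while the tree and unicyclic factors $U(z)^{n-M+r}/(n-M+r)!$, $e^{V(z)}$, and the edge-sequence factors are untouched, since these components never affect $\mathcal{G}$-membership by the equivalence just established. Applying Lemma \ref{le:integral} --- whose estimate, as emphasized after its proof, does not depend on the exponent $a$ of $T(z)$ and therefore applies identically to the upper and lower bounds --- shows that the limiting probability of having a kernel in $\mathcal{G}$ of size $2r$ is $\sqrt{2\pi}\,h_r A(3r+\ts\frac12,\lambda)$. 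Summing over $r$ yields the stated formula for $p_{\mathcal{G}}(\lambda)$, and the specialization $\lambda=0$ follows from the explicit value of $A(3r+\ts\frac12,0)$ already recorded in Lemma \ref{le:airy}.

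For the two-sided bound $0 < p_{\mathcal{G}}(\lambda) < 1$, I would argue as follows. The lower bound is immediate: the empty kernel (the term $r=0$) corresponds to $G(\lambda)$ having no complex component, which occurs with positive limiting probability $\sqrt{2\pi}\,h_0 A(\ts\frac12,\lambda)>0$ for every $\lambda$, and this single term already forces $p_{\mathcal{G}}(\lambda)>0$. For the upper bound, one needs that some cubic multigraph is \emph{forbidden}, i.e.\ that $h_r < e_r$ for at least one $r$ with $A(3r+\ts\frac12,\lambda)>0$; then the corresponding term in $p_{\mathcal{G}}(\lambda)$ is strictly smaller than the matching term in the total probability $\sum_r \sqrt{2\pi}\,e_r A(3r+\ts\frac12,\lambda)=1$, giving $p_{\mathcal{G}}(\lambda)<1$. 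Since $\mathcal{G}$ is a proper minor-closed class with at least one 3-connected excluded minor $H_1$, that minor --- or, if $H_1$ is not itself cubic, a cubic multigraph containing it as a minor, obtained by expanding high-degree vertices into small 3-regular gadgets --- appears as a kernel with positive probability but is excluded from $\mathcal{G}$, so strictness holds.

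The main obstacle is the equivalence in the first paragraph, namely controlling exactly how minors of $G(\lambda)$ relate to minors of its kernel, and in particular justifying the claim that a forbidden 3-connected minor must already live in the kernel. The delicate direction is ruling out that the tree-attachment and edge-subdivision operations could somehow create a new 3-connected minor; this is where the 3-connectivity hypothesis on the $H_i$ is essential, since subdivisions and pendant trees can only produce minors of topological (connectivity $\le 2$) type beyond what the kernel already contains. I expect the cleanest route is to show directly that for a 3-connected graph $H$, $H$ is a minor of $G$ if and only if $H$ is a minor of the kernel $K$, and then to invoke this for each $H_i$ separately.
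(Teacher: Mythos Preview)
Your proposal is correct and follows essentially the same approach as the paper: reduce membership in $\mathcal{G}$ to a kernel property via the 3-connectivity hypothesis, replace $e_r$ by $h_r$ in the analysis of Lemma~\ref{le:airy} and Theorem~\ref{th:planar}, and obtain the bounds from the $r=0$ term and from the existence of a forbidden cubic multigraph. The only cosmetic difference is that for the upper bound the paper builds a cubic graph containing $K_{t+1}$ (with $t=\max_i |H_i|$) as a minor rather than working directly with $H_1$, but both constructions amount to the same vertex-splitting idea.
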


\begin{proof}
If all the $H_i$ are 3-connected, then clearly a graph is in  $\mathcal{G}$ if and only its kernel is in $\mathcal{G}$. The probability $p_{\mathcal{G}}(\lambda)$ is then computed as in Theorem \ref{th:planar}. It is positive since $\mathcal{G}$ contains all trees and unicyclic graphs, which  contribute with positive probability (although tending to 0 as $\lambda \to\infty$). To prove that it is less than one, let $t$ be the largest size of the excluded minors $H_i$. By splitting vertices it is easy to construct cubic graphs containing $K_{t+1}$ as a minor, hence
$G(\lambda)$ contains $K_{t+1}$ as a minor with positive probability (alternatively, see the argument at the end of \cite{LuczakStruct}). It follows that $1 - p_{\mathcal{G}}(\lambda) >0 $.
\end{proof}

\begin{figure}[htb]
    \begin{center}
\includegraphics[scale=0.4]{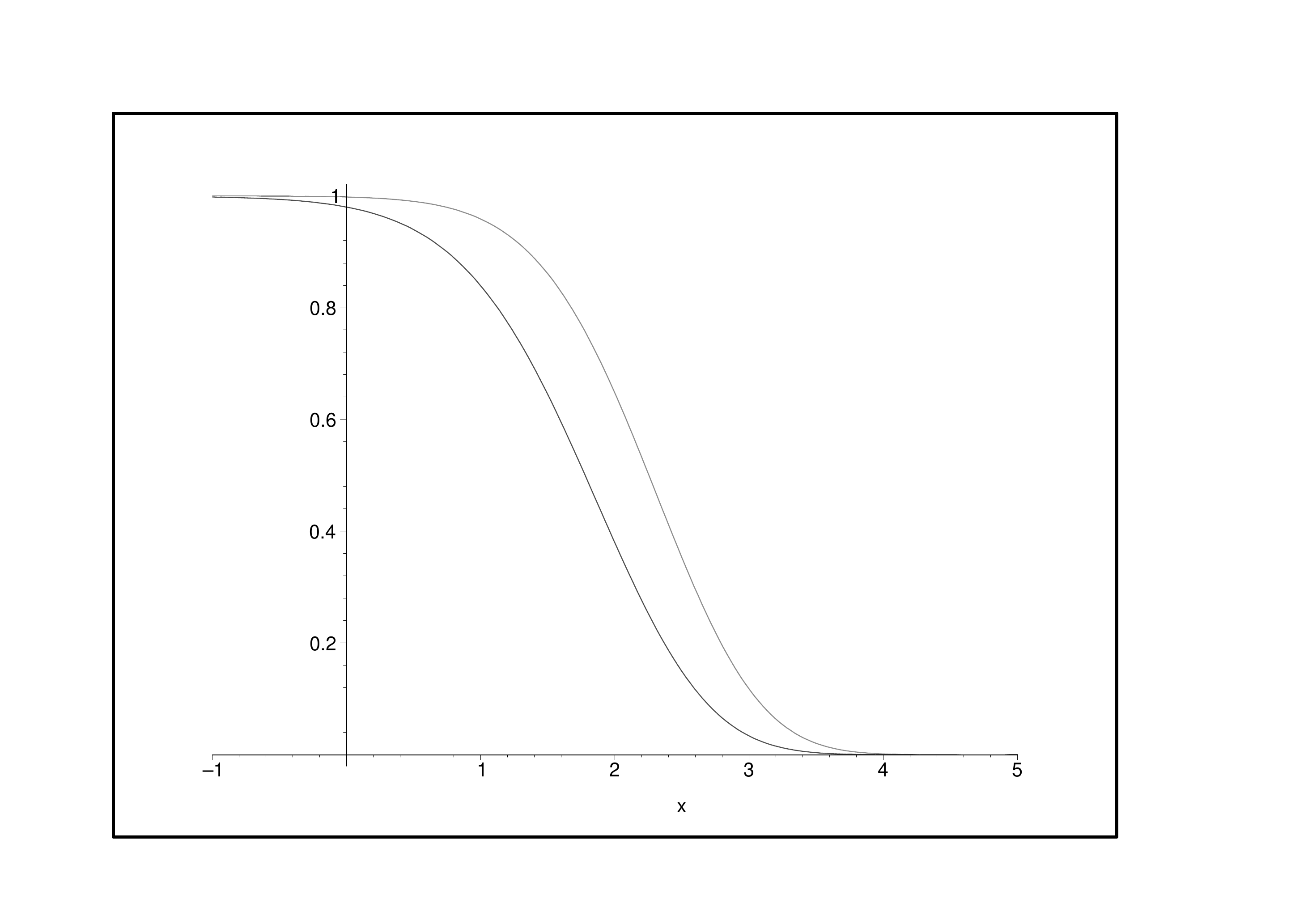}
\caption{The probability of $G(\lambda)$ being planar and of being series-parallel are both plotted for $\lambda \in [-1..4]$. The function on top corresponds to the planar case.}
    \end{center}
\end{figure}

In some cases of interest we are able to compute the numbers $h_r$ explicitly. Let $\mathcal{G} = \hbox{Ex}(K_4)$ be the class of series-parallel graphs.
The same system of equations as in Lemma \ref{le:cubic-planar} holds for series-parallel graphs with the difference that now $H(z) = 0$ (this is due to the fact that there are no 3-connected series-parallel graphs). The generating function for cubic series-parallel multigraphs can be computed as
$$
        G_{\rm sp}(z) = 1+{\frac {5}{24}}{z}^{2}+{\frac {337}{1152}}{z}^{4}+{\frac {55565}{
82944}}{z}^{6}+{\frac {15517345}{7962624}}{z}^{8}+ \cdots
$$
For instance, $[z^4](G(z)-G_{\rm sp}(z)) = {1 \over 24}$, corresponding to the fact that $K_4$ is the only cubic multigraph with $4$ vertices which is not series-parallel.
The limiting probability that $G(n,{n\over2})$ is series-parallel is
$$
    p_{\,\rm sp}(0) \approx  0.98003.
$$
See Figure 1 for a plot of $p_{\,\rm sp}(\lambda)$.

The class $\hbox{Ex}(K_4,K_{2,3})$ of outerplanar graphs does not fall directly under this scheme, since $K_{2,3}$ is not 3-connected, but adapting the equations in Lemma \ref{le:cubic-planar} (in particular the parallel decomposition encoded by $P(z)$) it is possible to enumerate exactly cubic outerplanar multigraphs. The first terms in the generating function are
$$
G_{\rm out}(z) =
1+{\frac {5}{24}}\,{z}^{2}+{\frac {337}{1152}}\,{z}^{4}+{\frac {55565}
{82944}}\,{z}^{6}+{\frac {14853793}{7962624}}\,{z}^{8} + \cdots
$$
The first discrepancy with $G_{\rm sp}(z)$ is at $z^8$, corresponding to the graph $K_{2,3}$ with either a loop or a double edge attached at the vertices of degree two.
The probability of being outerplanar is
$$
    p_{\,\rm out}(0) \approx  0.97979.
$$
We do not plot $p_{\,\rm out}(\lambda)$ in Figure 1 since it is too close to $p_{\,\rm sp}(\lambda)$ to see a clear distinction.

As another example, consider  excluding $K_{3,3}$. Since the only 3-connected non-planar graph in $\hbox{Ex}(K_{3,3})$ is $K_5$, which is not cubic, the limiting probability of being in this class is \textit{exactly} the same as of being planar, although $\hbox{Ex}(K_{3,3})$ is exponentially larger than the class of planar graphs \cite{k33}. But excluding the graph $K_{3,3}^+$, obtained by adding one edge to $K_{3,3}$, does increase the probability, since $K_{3,3}$ is in the class and is cubic and non-planar (the probability is computable since the 3-connected graphs in $\hbox{Ex}(K_{3,3}^+)$ are known \cite{k33}). Other classes such as $\hbox{Ex}(K_5 -e)$ or $\hbox{Ex}(K_3 \times K_2)$ can be analyzed too using the results from \cite{3-conn}.

It would be interesting to compute the probability that $G(\lambda)$ has genus $g$. For this we need to count cubic multigraphs of genus $g$ (orientable or not). We only know how to do this for $g=0$, the reason being that a 3-connected planar graph has a unique embedding in the sphere. This is not at all true in positive genus. It is true though that almost all 3-connected graphs of genus $g$ have a unique embedding in the surface of genus $g$ (see \cite{genus}). This could be the starting point for the enumeration, by counting first 3-connected maps of genus $g$ (a map is a graph equipped with a 2-cell embedding).
But this is not enough here, since we need the \emph{exact} numbers of graphs.

\section*{Acknowledgements}
The first and third authors acknowledge the warm hospitality and support from
the Laboratoire d'Informatique Algorithmique: Fondements et Applications (LIAFA) in Paris, where much of this work was done.


\begin{thebibliography}{99}

\bibitem{cubic-planar}
M. Bodirsky, M. Kang, M. L\"offler, C. McDiarmid.
Random cubic planar graphs.
{\em  Random Structures Algorithms} 30 (2007) 78--94.

%\bibitem{species}
%F. Bergeron, G. Labelle, P. Leroux.
%\emph{Combinatorial Species and Tree-like Structures}.
%Cambridge University Press, 1997.

\bibitem{bollobas84a}
B. Bollob\'as. The evolution of random graphs. \textit{Trans. Amer. Math. Soc.} 286 (1984) 257-274.

\bibitem{bollobas84b}
B. Bollob\'as.
The evolution of sparse random graphs. In  \textit{Graph Theory and Combinatorics} (Proc.
Cambridge Combinatorial Conf. in Honour of Paul Erd\H os), Academic Press, San Diego, 1984,
pp. 35--37.

\bibitem{bollobas}
B. Bollob\'as.
{\em Random Graphs}.
Academic Press, London, 1985.

\bibitem{2-sat} B. Bollob\'{a}s, C. Borgs, J. T. Chayes,
J. H.  Kim, D. B. Wilson. The Scaling Window of the
2-SAT Transition. \textit{Random Structures Algorithms} 18 (2001), 201--256.

\bibitem{genus}
G. Chapuy, E. Fusy, O. Gim\'{e}nez, B. Mohar, M. Noy. Asymptotic
enumeration and limit laws for graphs of fixed genus. \textit{J.
Combin. Theory Ser. A} 118 (2011) 748--777.

\bibitem{erdos}
P. Erd\H os, A. R\'enyi.
 On the evolution of random graphs.
 {\em Publ. Math. Inst. Hung. Acad. Sci.} 5 (1960) 17--61.

\bibitem{FKP89}
P. Flajolet, D. E. Knuth, B. Pittel.
 The first cycles in an evolving graph.
 {\em Discrete Math.} 75 (1989) 167--215.

\bibitem{airy}
P. Flajolet, B. Salvy, G. Schaeffer.
Airy Phenomena and Analytic Combinatorics of Connected Graphs.
\emph{Electr. J. Combin.} 11 (2004) R34.

\bibitem{FS}
P. Flajolet, R. Sedgewick.
{\em Analytic Combinatorics.}
Cambridge University Press, Cambridge, 2009.

%\bibitem{Gimenez-Noy}
%O.  Gim\'enez, M. Noy.
%Asymptotic enumeration and limit laws of planar graphs.
% {\em Journal Amer. Math. Soc.} 22 (2009) 309--329.

\bibitem{k33}
S. Gerke, O. Gim\'{e}nez, M. Noy, A. Weissl.
The number of graphs not containing $K_{3,3}$ as a minor.
\emph{Electr. J. Combin.} 15 (2008) R114.

\bibitem{3-conn}
O.  Gim\'enez, M. Noy, J. Ru\'{e}.
Graph classes with given 3-connected components:
asymptotic enumeration and random graphs.
\emph{Random Structures Algorithms} (to appear).

\bibitem{giant}
S. Janson, D. E. Knuth, T. \L uczak, B. Pittel.
 The birth of the giant component.
 {\em Random Structures Algorithms} 4 (1993) 233--358.

\bibitem{janson}
S. Janson, T. \L uczak,  A. Rucinski.
{\em Random Graphs}.  John Wiley, New York, 2000.

\bibitem{Kang-Luczak}
M. Kang, T. \L uczak.
Two critical periods in the evolution of random planar graphs.
arXiv:1006.0444v2 (to appear in \emph{Trans. Amer. Math. Soc.)}.

\bibitem{LuczakStruct}
T. \L uczak, B. Pittel, J.~C.~Wierman.
The structure of a random graph near the point of the phase transition.
{\em Trans. Amer. Math. Soc.} 341 (1994) 721--748.


\bibitem{otter} R. Otter. The number of trees.
 \textit{Ann. of Math.} 49 (1948) 583--599.


\bibitem{stepanov}
V. E. Stepanov.
On some features of the structure of a random graph near a critical point.
\textit{Theory Probab. Appl.} 32 (1988) 573--594.

%\bibitem{NAUTY}
%B. McKay. {http://cs.anu.edu.au/~bdm/nauty/}

%\bibitem{Wr77}
%Wright, E.~M.
%The Number of Connected Sparsely Edged Graphs.
%{\em Journal of Graph Theory} 1 (1977) 317--330.

%\bibitem{Wr78}
%E.~M. Wright.
%The Number of Connected Sparsely Edged Graphs. II. Smooth graphs and
%blocks. {\em Journal of Graph Theory} 2 (1978) 299--305.

\bibitem{wright}
E.~M. Wright.
The Number of Connected Sparsely Edged Graphs. III. Asymptotic
results. {\em Journal of Graph Theory} 4 (1980) 393--407.


\end{thebibliography}
\end{document}